\documentclass[reqno]{amsart}
\usepackage[all]{xy}
\usepackage{amssymb}
\usepackage{enumitem}
\usepackage{hyperref}
\usepackage{color}


\input xy
\xyoption{all}
\CompileMatrices

\definecolor{myurlcolor}{rgb}{0,0,0.5}
\hypersetup{colorlinks,
linkcolor=myurlcolor,
citecolor=myurlcolor,
urlcolor=myurlcolor}

\newcommand{\beq}{\begin{displaymath}}
\newcommand{\eeq}{\end{displaymath}}
\newcommand{\beqn}{\begin{equation}}
\newcommand{\eeqn}{\end{equation}}

\newcommand{\ra}{\rightarrow}

\newcommand{\ind}[1]{\textbf{#1\index{#1}}}

\swapnumbers
\newtheorem{prop}{Proposition}[section]
\newtheorem{thm}[prop]{Theorem}
\newtheorem*{thm*}{Theorem}
\newtheorem{defn}[prop]{Definition}
\newtheorem{cor}[prop]{Corollary}
\newtheorem{lem}[prop]{Lemma}
\theoremstyle{definition} 

\newtheorem{rem}[prop]{Remark}

\relpenalty=10000
\binoppenalty=10000

\setcounter{secnumdepth}{1}

\begin{document}

\title{Categories of Fractions Revisited}

\author{Tobias Fritz}
\email{tobias.fritz@icfo.es}
\address{ICFO -- Institut de Ci\`{e}ncies Fot\`{o}niques\\ 
Mediterranean Technology Park\\ 
08860 Castelldefels (Barcelona)\\ 
Spain}

\begin{abstract}
The theory of categories of fractions as originally developed by Gabriel and Zisman~\cite{GabZis} is reviewed in a pedagogical manner giving detailed proofs of all statements. A weakening of the category of fractions axioms used by Higson~\cite{Hig} is discussed and shown to be equivalent to the original axioms.
\end{abstract}

\maketitle
\tableofcontents

\section{Introduction}

In category theory, the concept of localization is a tool for constructing a new category from a given one. The idea is as follows: a category may have a certain class of morphisms which are not all invertible, although morally they ``should'' be invertible. As an example, one may consider weak homotopy equivalences in the homotopy category of topological spaces: some weak homotopy equivalences are homotopy equivalences, and hence isomorphisms, but not all of them are~\cite{Hatcher}; on the other hand, two weakly homotopy equivalent spaces behave in absolutely the same way concerning the properties probed by maps from or to suitably nice spaces, and hence should morally be isomorphic.

Given such a class of morphisms in a category, one can form a \emph{localization} of the original category, which is a new category which guarantees all ``morally invertible'' morphisms to be invertible, while approximating the original category as closely as possible. This idea can be made precise in terms of a universal property; see section~\ref{sectionlocalization}.

Localizations exist not only for categories, but also for other kinds of algebraic structures. For example for rings: adjoining formal inverses for a certain class of ring elements yields a new ring from a given one. Under certain conditions on the class $\mathcal{W}$ of elements to be inverted---the so-called Ore conditions---there is a particularly nice way to describe the elements of the localized ring in terms of an equivalence class of formal fractions, where a formal fraction is defined to have an element of the original ring in the numerator and an element of $\mathcal{W}$ in the denominator.

It turns out that pretty much the same technique that works for rings can also applied to categories. Under certain conditions, the localization of a category with respect to a class of morphisms can be described in terms of ``formal fractions''. If this construction is possible, the resulting localization is a \emph{category of fractions}. In some cases, such an abstract construction can be more useful than a concrete (in the category-theoretical sense!) description of the localization. Furthermore, categories of fractions can be relevant for other general categorical constructions; the theory of Verdier localization in the context of triangulated categories is an example.

Due to the metamathematical nature of category theory, the objectives in category theory are quite different from those in ring theory: thinking of a category as representing the collection of models of a mathematical theory, taking a category of fractions is a tool to construct a new \emph{mathematical theory} from a given one. 

\subsection{Summary.} In section~\ref{sectionlocalization}, the concept of localization of a category is introduced and compared to taking a quotient category. Section~\ref{catsoffractions} then gives a detailed account of the category of fraction axioms and their consequences; in particular, all proofs are presented in complete detail. Section~\ref{sectionweakening} goes on to study a weakening of the category of fraction axioms which was originally introduced by Higson~\cite{Hig} in the context of bivariant $K$-theory of $C^*$-algebras. It is shown that this weakening is equivalent to the usual set of axioms. This is the only new result of the present work. Finally, section~\ref{sectionadditive} shows that a category of fractions is additive in case the original category is additive.

\subsection{Notation and terminology.} In all commutative diagrams, the objects are simply denoted by fat dots ``$\bullet$''. Unless noted otherwise, all diagrams commute. Idenitity morphisms are pictured as double lines ``$\xymatrix{\ar@{=}[r]&}$''. The words ``isomorphism'' and ``monomorphism'' are abbreviated respectively as ``iso'' and ``mono''. A split mono is a morphism which has a left inverse; it automatically is a mono. Domain and codomain of a morphism $f$ are written as $\mathrm{dom}(f)$ and $\mathrm{cod}(f)$, respectively.

This article is a revised version of part of the author's Master's thesis written at the University of M\"unster in 2007.

\section{Localization of categories}
\label{sectionlocalization}

In some contexts it may happen that we have a category $\mathcal{C}$ which is -- in a sense depending on the situation -- not well-behaved. For example, it might be that it is too hard to do concrete calculations, or it might be that $\mathcal{C}$ does not have some desired formal property. Then one can try to find a second category $\widehat{\mathcal{C}}$ which has the same objects as $\mathcal{C}$ together with a functor $\mathcal{C}\ra\widehat{\mathcal{C}}$ which is the identity on objects, such that $\widehat{\mathcal{C}}$ is better-behaved and approximates $\mathcal{C}$ in some appropriate sense also depending on the situation. Then instead of working in $\mathcal{C}$ directly, one can transport the morphisms from $\mathcal{C}$ to $\widehat{\mathcal{C}}$ via the functor $\mathcal{C}\ra\widehat{\mathcal{C}}$ and prove theorems about the morphisms in the well-behaved category $\widehat{\mathcal{C}}$. The price one has to pay is that in general some information about the structure of $\mathcal{C}$ is lost on the way.

Now there are at least two concrete ways to make this precise. The first one is the notion of a \ind{quotient category}. Suppose we are given an equivalence relation $\sim$ on every morphism set $\mathcal{C}(A,B)$ which is preserved under composition, meaning that
\beqn
(f_1\sim f_2)\Longrightarrow (f_1g\sim f_2g) \wedge (hf_1\sim hf_2)\qquad\forall f_1,f_2,g,h\in\mathcal{C}
\eeqn
whenever these compositions are defined. Then composition of equivalence classes is well-defined and defines the quotient category $\mathcal{C}/\!\!\sim$ together with the canonical projection functor $\mathcal{C}\ra\mathcal{C}/\!\!\sim$. Any kind of homotopy theory serves as a good example.

The second way is a concept called \ind{localization}. It may be familiar from ring theory. Suppose we are given a category $\mathcal{C}$ and a subclass of morphisms called $\mathcal{W}$, which ``morally'' ought to be isos, but in $\mathcal{C}$ not necessarily all of them are; using the letter $\mathcal{W}$ is supposed to suggest a reading like ``weak equivalence''~\cite{Hov}. We try to turn all the morphisms in $\mathcal{W}$ into isos by adjoining formal inverses for them. More precisely, we are looking for a category $\widehat{\mathcal{C}}=\mathcal{C}[\mathcal{W}^{-1}]$ equipped with a localization functor $Loc:\mathcal{C}\ra\mathcal{C}[\mathcal{W}^{-1}]$ which has the following universal property:
\begin{enumerate}[label=(\alph{enumi})]
\item $Loc(w)$ is an iso for all $w\in\mathcal{W}$,
\item\label{b} If $F:\mathcal{C}\ra\mathcal{D}$ is any functor which maps $\mathcal{W}$ to isos, then $F$ factors uniquely over $Loc$ as in the diagram
\beqn
\begin{split}
\label{univ}
\xymatrix{{}\mathcal{C}\ar[rr]^{Loc}\ar[rd]_F & & {}\mathcal{C}[\mathcal{W}^{-1}]\ar@{-->}[ld]^{\exists!} \\
& {}\mathcal{D} & }
\end{split}
\eeqn
\end{enumerate}

In case such a functor exists, the category $\mathcal{C}[\mathcal{W}^{-1}]$ is called the ``localization of $\mathcal{C}$ with respect to $\mathcal{W}$''. It serves as the desired approximation $\widehat{\mathcal{C}}$ to $\mathcal{C}$.

Since $\mathcal{C}[\mathcal{W}^{-1}]$ is defined via a universal property, it is certainly unique (up to a unique iso). Proving existence is the nontrivial part.

\begin{thm}
$\mathcal{C}[\mathcal{W}^{-1}]$ and $Loc$ always exist.
\end{thm}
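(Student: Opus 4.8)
The plan is to construct $\mathcal{C}[\mathcal{W}^{-1}]$ explicitly as a category whose objects are those of $\mathcal{C}$ and whose morphisms are equivalence classes of finite zigzag paths. First I would introduce, for each $w\in\mathcal{W}$, a formal symbol $\overline{w}$ intended to become $Loc(w)^{-1}$, with $\mathrm{dom}(\overline{w})=\mathrm{cod}(w)$ and $\mathrm{cod}(\overline{w})=\mathrm{dom}(w)$. A \emph{path} from $A$ to $B$ is then a finite composable string of arrows, each of which is either a genuine morphism of $\mathcal{C}$ or a formal inverse $\overline{w}$. Composition of paths is concatenation, which is strictly associative and has the empty path as a two-sided unit, so the collection of all paths forms a category $P\mathcal{C}$ (a ``free'' category on the relevant graph). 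The localization will be the quotient of $P\mathcal{C}$ by the smallest congruence $\sim$ (an equivalence relation on each hom-class compatible with composition on both sides, exactly as in the quotient-category discussion above) that imposes the following relations: (i) the empty path at $A$ is identified with $\mathrm{id}_A$; (ii) a length-two path $g\cdot f$ of genuine $\mathcal{C}$-morphisms is identified with their composite $gf$ in $\mathcal{C}$; and (iii) for each $w\in\mathcal{W}$, both $\overline{w}\cdot w$ and $w\cdot\overline{w}$ are identified with the appropriate empty path. Define $\mathcal{C}[\mathcal{W}^{-1}] := P\mathcal{C}/\!\!\sim$ and let $Loc$ send an object to itself and a morphism $f$ to the class of the length-one path $[f]$.

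Next I would verify the universal property. That $Loc$ is a functor is immediate from relations (i) and (ii); that $Loc(w)$ is an iso for $w\in\mathcal{W}$ follows from relation (iii), with inverse $[\overline{w}]$. For the factorization, given $F:\mathcal{C}\to\mathcal{D}$ sending $\mathcal{W}$ to isos, define $\widetilde{F}:\mathcal{C}[\mathcal{W}^{-1}]\to\mathcal{D}$ on a path by sending a genuine arrow $f$ to $F(f)$ and a formal inverse $\overline{w}$ to $F(w)^{-1}$, then composing in $\mathcal{D}$; one checks this respects all three families of relations, hence descends to the quotient, and that $\widetilde{F}\circ Loc = F$ holds on objects and on generating morphisms, hence everywhere. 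Uniqueness of $\widetilde{F}$ is forced because $Loc$ is surjective on objects and every morphism of $\mathcal{C}[\mathcal{W}^{-1}]$ is a composite of arrows $[f]$ and $[\overline{w}] = Loc(w)^{-1}$, so the values of any factorization are determined.

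The one genuinely delicate point is the set-theoretic size issue: a priori the class of paths between two objects need not be a set, so $P\mathcal{C}/\!\!\sim$ might fail to be a (locally small) category in the strict sense. I would address this in the same spirit as Gabriel--Zisman: either work with a fixed universe and note that if $\mathcal{C}$ lives in that universe then so does $P\mathcal{C}$ and its quotient (the paths of bounded length between two objects form a set since $\mathcal{C}$'s hom-classes and $\mathcal{W}$ are sets, and a countable union of sets is a set), or simply remark that the construction produces a legitimate category whenever $\mathcal{C}$ is small, and a possibly-large one otherwise, which suffices for the universal property. No other step presents real difficulty; the bulk of the remaining work is the routine but slightly tedious check that $\sim$, being generated by composition-compatible generators, is automatically a congruence, so that composition on $\mathcal{C}[\mathcal{W}^{-1}]$ is well defined.
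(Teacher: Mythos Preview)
Your proposal is correct and follows essentially the same route as the paper: build the free category of paths on the graph $\mathcal{C}\cup\mathcal{W}^{-1}$, then quotient by the congruence generated by exactly your relations (i)--(iii), and verify functoriality and universality of the resulting $Loc$. The paper's proof is slightly terser and defers the size issue to a remark afterward rather than treating it inside the argument, but the construction and verification are the same.
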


\begin{proof}(from~\cite[III.2.2]{GM} and~\cite[1.1]{GabZis}).
The category $\mathcal{C}[\mathcal{W}^{-1}]$ can be constructed in two steps: start with the category of paths---call it $\mathcal{P}(\mathcal{C},\mathcal{W}^{-1})$---which has as objects the objects of $\mathcal{C}$, and as morphisms finite strings $\langle l_1,\ldots, l_n\rangle$ of composable literals, where a literal $l_k$ is either a morphism of $\mathcal{C}$ (including $\mathcal{W}$) or a formal inverse of a morphism in $\mathcal{W}$. Composition of these morphisms is defined as concatenation of strings. For every object $A\in\mathcal{C}$, we also have the empty string $\langle\rangle_A$ which starts and ends at $A$ and is the identity morphism of $A$ in $\mathcal{P}(\mathcal{C},\mathcal{W}^{-1})$. This whole definition can be summarized by saying that $\mathcal{P}(\mathcal{C},\mathcal{W}^{-1})$ is the free category generated by the graph $\mathcal{C}\cup\mathcal{W}^{-1}$.

There is a canonical map $\mathcal{C}\ra\mathcal{P}(\mathcal{C},\mathcal{W}^{-1})$ which is the identity on objects and maps every morphism $f\in\mathcal{C}$ to the corresponding single-literal string $\langle f\rangle$. This map already has the desired universal property~\ref{b}. However, neither is this map a functor nor does it map $\mathcal{W}$ to isos. We can easily fix both of these issues by taking a quotient category of $\mathcal{P}(\mathcal{C},\mathcal{W}^{-1})$ in which these properties are enforced. To this end, we introduce the equivalence relation $\sim$ on strings generated by closure under composition together with the elementary equivalences
\begin{enumerate}[label=(\alph{enumi})]
\item $\langle\,\rangle_A\sim\langle\,\mathrm{id}_A\rangle\qquad \forall A\in\mathrm{Obj}(\mathcal{C})$,
\item $\langle g,f\rangle\sim \langle gf\rangle\qquad   \forall f,g\in\mathcal{C}$ for which the composition $gf$ exists,
\item $\langle w,w^{-1}\rangle\sim\langle\,\rangle_{\mathrm{cod}(w)}\,,\quad \langle w^{-1},w\rangle\sim\langle\,\rangle_{\mathrm{dom}(w)}\qquad\forall w\in\mathcal{W}$.
\end{enumerate}
Then it is clear that the induced map $Loc:\mathcal{C}\ra\mathcal{P}(\mathcal{C},\mathcal{W}^{-1})/\!\!\sim$ is a functor and maps $\mathcal{W}$ to isos. 

As for universality, suppose we are given some functor $F:\mathcal{C}\ra\mathcal{D}$ mapping $\mathcal{W}$ to isos. It induces a unique functor $\mathcal{P}(\mathcal{C},\mathcal{W}^{-1})\ra\mathcal{D}$. This functor maps the above elementary equivalences to equalities, thus uniquely factors over the quotient category $\mathcal{P}(\mathcal{C},\mathcal{W}^{-1})/\!\!\sim$.
\end{proof}

\begin{rem}
\begin{enumerate}[label=(\alph{enumi})]
\item For locally small $\mathcal{C}$, the localization $\mathcal{C}[\mathcal{W}^{-1}]$ need not be locally small. Even under the conditions to be discussed in the next section, it may well happen that the localization has proper classes as the collections of morphisms between some pairs of objects. Showing that this does not happen in a concrete case seems to be a hard problem; one case where local smallness is known is for model categories and localizing with respect to the class of weak equivalences (see~\cite[p.7 and 1.2.10]{Hov}).
\item The canonical functor to a quotient category $\mathcal{C}\ra\mathcal{C}/\!\!\sim$ is full by definition of $\mathcal{C}/\!\!\sim$. However, this is usually not true for a localization functor $Loc:\mathcal{C}\ra\mathcal{C}[\mathcal{W}^{-1}]$.
\end{enumerate}
\end{rem}

\section{Categories of fractions}
\label{catsoffractions}

In all diagrams dealing with categories of fractions, a wiggly arrow $\xymatrix{\ar@{~>}[r]&}$ denotes a morphism in $\mathcal{W}$, while a straight arrow $\xymatrix{\ar[r]&}$ is any morphism of $\mathcal{C}$.

In certain situations, the localization $\mathcal{C}[\mathcal{W}^{-1}]$ can be described much more explicitly, which implies a large gain of control over the structure of this category. We say that the pair $\mathcal{W}\subseteq\mathcal{C}$ allows a \ind{calculus of left fractions}, if the following conditions are satisfied:
\\
\begin{enumerate}[label=(L\theenumi)]
\setcounter{enumi}{-1}
\item\label{L0} $\mathcal{W}$ contains all identity morphisms and is closed under composition. In other words, $\mathcal{W}\subseteq\mathcal{C}$ is a subcategory containing all objects.
\item\label{L1}
Given any $w\in\mathcal{W}$ and an arbitrary morphism $f$ with $\mathrm{dom}(f)=\mathrm{dom}(w)$, we can find $w'\in\mathcal{W}$ with $\mathrm{dom}(w')=\mathrm{cod}(f)$ and some morphism $f'$ with $\mathrm{cod}(f')=\mathrm{cod}(w')$, such that the diagram
\beq
\xymatrix{{}{\bullet}\ar@{~>}[r]^w\ar[d]_f & {}{\bullet}\ar[d]^{f'} \\
{\bullet}\ar@{~>}[r]_{w'} & {\bullet}}
\eeq
commutes.
\item\label{L2}
Given $w\in\mathcal{W}$ and parallel morphisms $f_1$, $f_2$ such that $f_1w=f_2w$, there exists $w'\in\mathcal{W}$ such that $w'f_1=w'f_2$.
\beq
\xymatrix{{}{\bullet}\ar@{~>}[r]^w & {}{\bullet}\ar@/^/[r]^{f_1}\ar@/_/[r]_{f_2} & {}{\bullet}\ar@{~>}[r]^{w'} & {}{\bullet}}
\eeq
\end{enumerate}

These conditions are exact analogues of the Ore conditions in the theory of (not necessarily commutative) rings~\cite[p. 3]{Jate}.

\begin{rem}
\label{generated}
Condition~\ref{L0} is not an essential restriction: if~\ref{L1} and~\ref{L2} hold for some class of morphisms $\mathcal{W}$, then both also hold for the $\mathcal{C}$-subcategory generated by $\mathcal{W}\cup\{\mathrm{id}_A,\,A\in\mathrm{Obj}(\mathcal{C})\}$. Hence $\mathcal{W}$ can be replaced by this subcategory.
\end{rem}

\begin{proof}
We assume that $\mathcal{W}$ satisfies~\ref{L1} and~\ref{L2}, but not necessarily~\ref{L0}. Then $\mathcal{W}\cup\{\mathrm{id}_A,\,A\in\mathrm{Obj}(\mathcal{C})\}$ certainly also satisfies~\ref{L1} and~\ref{L2}, so it is enough to show that closing $\mathcal{W}$ under composition gives a morphism class $\widehat{\mathcal{W}}$ which also satisfies~\ref{L1} and~\ref{L2}.

Let $w_1, w_2\in\mathcal{W}$ be composable to $w=w_1w_2$. Given any $f$ with $\mathrm{dom}(f)=\mathrm{dom}(w_1)$, applying~\ref{L1} twice shows that we can find $w'_1,w'_2\in\mathcal{W}$ and $f',f''\in\mathcal{C}$ such that the diagram
$$
\xymatrix{ {\bullet}\ar@{~>}[r]^{w_1} \ar[d]_f & \bullet \ar@{~>}[r]^{w_2} \ar[d]^{f'} & \bullet\ar[d]^{f''} \\
\bullet \ar@{~>}[r]_{w'_1} & \bullet \ar@{~>}[r]_{w'_2} & \bullet }
$$
commutes. Now $w'=w'_1w'_2\in\widehat{\mathcal{W}}$ and $f''$ have the required properties with respect to $w=w_1w_2$ and $f$. Applying this argument inductively proves the claim about~\ref{L1}.

Concerning~\ref{L2}, we similarly consider the situation $f_1w_2w_1=f_2w_2w_1$, and obtain
$$
\xymatrix{\bullet \ar@{~>}[r]^{w_1} & \bullet \ar@{~>}[r]^{w_2} & \bullet\ar@/^/[r]^{f_1}\ar@/_/[r]_{f_2} & \bullet\ar@{~>}[r]^{w'_1} & \bullet\ar@{~>}[r]^{w'_2} & \bullet }
$$
where, thanks to~\ref{L2}, we could choose $w'_1$ such that $w'_1f_1w_2=w'_1f_2w_2$, and then $w'_2$ such that $w'_2w'_1f_1=w'_2w'_1f_2$, as desired.
\end{proof}

\begin{defn}
\label{roof}
A \ind{roof} $(f,w)$ between two objects $\mathrm{dom}(f)$ and $\mathrm{dom}(w)$ is a diagram of the form
\beq
\xymatrix{ & {}{\bullet} & \\{}
{\bullet}\ar[ur]^f & & {}{\bullet}\ar@{~>}[ul]_w}
\eeq
\end{defn}

From now on, we assume that $\mathcal{W}\subseteq\mathcal{C}$ satisfies~\ref{L0},~\ref{L1} and~\ref{L2}, and derive some consequences from this assumption.

The way to think of a roof $(f,w)$ is as being a formal ``left fraction'' $w^{-1}f$, defining a formal morphism from the lower left object to the lower right object. Then~\ref{L1} intuitively states that it is possible to turn any formal ``right fraction'' $fw^{-1}$ into a left fraction $w'^{-1}f'$, since $w'f=f'w$ together with invertibility of $w$ and $w'$ implies $fw^{-1}=w'^{-1}f'$.

\begin{defn}
\label{roofeq}
Two roofs $(f_1,w_1)$ and $(f_2,w_2)$ are \ind{equivalent} if there are morphisms $g$ and $h$ forming a third roof $(gf_1,gw_1)=(hf_2,hw_2)$ as in the diagram
\beq
\xymatrix{ & & {}{\bullet} & & \\
& {}{\bullet}\ar[ur]^g & & {}{\bullet}\ar[ul]^h & \\{}
{\bullet}\ar[ur]^{f_1}\ar[urrr]_(.4){f_2} & & & & {}{\bullet} \ar@{~>}[ulll]^(.4){w_1} \ar@{~>}[ul]|{w_2} \ar@{~>}@/_2.2pc/[uull]|{hw_2=gw_1}}
\eeq
\end{defn}

Note that it is not required that $g$ or $h$ be an element of $\mathcal{W}$, only the \emph{composition} $gw_1=hw_2$ has to be in $\mathcal{W}$. The equality of $(gf_1,gw_1)=(hf_2,hw_2)$ is expressed by commutativity of the two squares in the diagram.

The goal of this section is to establish that equivalence classes of roofs form a category under the appropriate composition operation, and that this category is the localization $\mathcal{C}[\mathcal{W}^{-1}]$. This will be done in a sequence of small steps. Intuitively, the first step is to show that the roof $(f',w')$ one obtains from using~\ref{L1} to turn a formal right fraction $fw^{-1}$ into a formal left fraction $w'^{-1}f'$ is unique up to equivalence. This will let us define composition of equivalence classes of roofs later on.

\begin{lem}
\label{l1unique}
Any two ways to choose $f'$ and $w'$ in~\ref{L1} define equivalent roofs.
\end{lem}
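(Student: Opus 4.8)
The plan is to start from two applications of (L1) to the given data $w\in\mathcal W$ and $f$ with $\mathrm{dom}(f)=\mathrm{dom}(w)$: say $w_1'f=f_1'w$ and $w_2'f=f_2'w$ with $w_1',w_2'\in\mathcal W$. The goal is to produce morphisms $g$ and $h$ with $gf_1'=hf_2'$ and $gw_1'=hw_2'\in\mathcal W$, since by Definition~\ref{roofeq} this is exactly what it means for the roofs $(f_1',w_1')$ and $(f_2',w_2')$ to be equivalent.

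First I would merge the two ``denominators''. The morphisms $w_1'$ and $w_2'$ share the domain $\mathrm{cod}(f)$, and $w_1'\in\mathcal W$, so (L1) applied to the pair $(w_1',w_2')$ — with $w_1'$ in the role of the $\mathcal W$-morphism — yields $\widetilde w\in\mathcal W$ and a morphism $\widetilde f$ with $\widetilde f w_1'=\widetilde w w_2'$. Set $g_0:=\widetilde f$ and $h_0:=\widetilde w$. Then $g_0w_1'=h_0w_2'$, and this common composite lies in $\mathcal W$ because it equals $\widetilde w w_2'$, a composite of two elements of $\mathcal W$; here (L0) is used.

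The two ``numerators'' $g_0f_1'$ and $h_0f_2'$ need not yet be equal, but they become so after precomposing with $w$: a short chain of equalities using $f_i'w=w_i'f$ and $g_0w_1'=h_0w_2'$ gives $(g_0f_1')w=(h_0f_2')w$. These two morphisms are parallel, so (L2), applied to the original $w$, supplies some $v\in\mathcal W$ with $v(g_0f_1')=v(h_0f_2')$. Putting $g:=vg_0$ and $h:=vh_0$ then gives $gf_1'=hf_2'$ and $gw_1'=hw_2'$; moreover $gw_1'=v(h_0w_2')\in\mathcal W$, once more by (L0) since both factors lie in $\mathcal W$. This provides exactly the data required by Definition~\ref{roofeq}, so the two roofs are equivalent.

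The one point that requires care — and the step I expect would be the main obstacle on a first attempt — is recognizing that a single use of (L1) does not suffice: merging the denominators forces the numerators to agree only ``up to $w$'', and it is precisely (L2) that repairs this. The remaining content is routine bookkeeping of domains and codomains, which I would organize around the evident diagram rather than spell out in detail.
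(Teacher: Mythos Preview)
Your proof is correct and follows exactly the same strategy as the paper: apply (L1) to merge the two denominators $w_1',w_2'$, observe that the resulting numerators agree only after precomposition with $w$, and then invoke (L2) to repair this discrepancy, with (L0) ensuring the final common denominator lies in $\mathcal W$. Aside from notation (the paper writes $g,\widetilde w,\widehat w$ where you write $g_0,h_0,v$), the arguments are identical.
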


\begin{proof}
Imagine two possible choices $(f'_1,w'_1)$ and $(f'_2,w'_2)$ as in the partially commutative diagram
\beq
\xymatrix{&&{\bullet} &&\\
&&{\bullet}\ar@{~>}[u]^{\widehat{w}}\\
&{\bullet}\ar[ru]^g && {\bullet}\ar@{~>}[lu]_{\widetilde{w}} &\\
{\bullet}\ar[ru]^{f'_1}\ar[rrru]_(.4){f'_2}&&&&{\bullet}\ar@{~>}[lllu]^(.4){w'_1}\ar@{~>}[lu]_{w'_2}\\
&&{\bullet}\ar@{~>}[llu]^w\ar[rru]_f &&}
\eeq
By~\ref{L1}, $g$ and $\widetilde{w}$ were chosen such that $gw'_1=\widetilde{w}w'_2$. This is not yet an equivalence of roofs, since, in general, $gf'_1\neq\widetilde{w}f'_2$. However, we do know that $gf'_1w=\widetilde{w}f'_2w$, so by~\ref{L2} we can choose $\widehat{w}$ such that $\widehat{w}gf'_1=\widehat{w}\widetilde{w}f'_2$. This makes $(f'_1,w'_1)$ and $(f'_2,w'_2)$ equivalent via $\widehat{w}g$ and $\widehat{w}\widetilde{w}$.
\end{proof}

\begin{lem}
\label{trans}
The equivalence of roofs from definition~\ref{roofeq} is an equivalence relation.
\end{lem}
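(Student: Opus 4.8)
The plan is to verify the three defining properties of an equivalence relation separately. Reflexivity and symmetry are immediate from Definition~\ref{roofeq}, and essentially all of the work lies in transitivity, where the calculus-of-fractions axioms (L1) and (L2) enter.

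For reflexivity, the roof $(f,w)$ is equivalent to itself via $g=h=\mathrm{id}_{\mathrm{cod}(f)}$: the squares commute trivially and the distinguished composite $gw=w$ lies in $\mathcal{W}$ by (L0). Symmetry is visibly built into the definition: the data witnessing $(f_1,w_1)\sim(f_2,w_2)$ — a pair $(g,h)$ with $gf_1=hf_2$, $gw_1=hw_2$, and $gw_1\in\mathcal{W}$ — is exactly the data witnessing $(f_2,w_2)\sim(f_1,w_1)$ after exchanging $g$ and $h$.

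The real content is transitivity. I would start from two equivalences, $(f_1,w_1)\sim(f_2,w_2)$ witnessed by a pair $(g_1,g_2)$ — so $g_1f_1=g_2f_2$ and $g_1w_1=g_2w_2\in\mathcal{W}$ — and $(f_2,w_2)\sim(f_3,w_3)$ witnessed by $(h_2,h_3)$. The key move is to feed the two composites $g_2w_2$ and $h_2w_2$, which share the domain $\mathrm{dom}(w_2)$ and of which the first lies in $\mathcal{W}$, into (L1). This produces $p\in\mathcal{W}$ and a morphism $q$ with $q(g_2w_2)=p(h_2w_2)$, i.e.\ $(qg_2)w_2=(ph_2)w_2$. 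Now $qg_2$ and $ph_2$ are parallel and agree after precomposition with $w_2\in\mathcal{W}$, so (L2) supplies $r\in\mathcal{W}$ with $rqg_2=rph_2$. I then claim the pair $(rqg_1,\,rph_3)$ witnesses $(f_1,w_1)\sim(f_3,w_3)$: the chain $rqg_1f_1=rqg_2f_2=rph_2f_2=rph_3f_3$ uses in turn $g_1f_1=g_2f_2$, $rqg_2=rph_2$, and $h_2f_2=h_3f_3$, while the parallel chain $rqg_1w_1=rqg_2w_2=rph_2w_2=rph_3w_3$ handles the $w$-legs. Finally the common value equals $rp(h_2w_2)$, a composite of the three $\mathcal{W}$-morphisms $r$, $p$, and $h_2w_2=h_3w_3$, hence lies in $\mathcal{W}$ by (L0).

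The only step needing care — the main obstacle, such as it is — is checking that each axiom is invoked legitimately: (L1) must be applied to two morphisms out of a common object, which is why one precomposes the apex-maps $g_2,h_2$ with $w_2$ before using it; and (L2) must be applied to morphisms $qg_2,ph_2$ that become equal only \emph{after} composing with the element $w_2\in\mathcal{W}$, which is exactly the shape (L2) provides. Drawing the full composite as a diagram in the style of the proof of Lemma~\ref{l1unique} makes the bookkeeping routine.
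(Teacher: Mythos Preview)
Your proposal is correct and follows essentially the same route as the paper: apply (L1) to the two $\mathcal{W}$-composites through $w_2$ coming from the two given equivalences, then use (L2) on $w_2$ to equalise the resulting apex maps, and read off the new witnessing pair. Up to renaming (your $g_1,g_2,h_2,h_3,q,p,r$ are the paper's $g,h,g',h',k,\widetilde{w},\widehat{w}$), the argument is identical.
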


\begin{proof}
Reflexivity and symmetry are obvious. For transitivity, suppose we are given an equivalence between $(f_1,w_1)$ and $(f_2,w_2)$, and one between $(f_2,w_2)$ and $(f_3,w_3)$, as in the partially commutative diagram
\beq
\xymatrix{&&&&{}{\bullet}&&&&\\
&&&&{}{\bullet}\ar@{~>}[u]^{\widehat{w}}&&&&\\
&&{}{\bullet}\ar[rru]^k&&&&{}{\bullet}\ar@{~>}[llu]_{\widetilde{w}}&&\\
{\bullet}\ar[rru]^g&&&&{\bullet}\ar[llu]_h\ar[rru]^{g'}&&&&{\bullet}\ar[llu]_{h'}\\\\
&&{\bullet}\ar[lluu]^{f_1}\ar[rruu]^(.7){f_2}\ar[rrrrrruu]^(.8){f_3}&&&&{\bullet}\ar@{~>}[lllllluu]_(.8){w_1}\ar@{~>}[lluu]_(.7){w_2}\ar@{~>}[rruu]_{w_3}\ar@{~>}@/^4pc/[lllluuu]|(.88){gw_1=hw_2}\ar@{~>}[uuu]|(.76){g'w_2=h'w_3}&&}
\eeq
Here, the equivalence between $(f_1,w_1)$ and $(f_2,w_2)$ is assumed to be implemented by $g$ and $h$, while the one between $(f_2,w_2)$ and $(f_3,w_3)$ is implemented by $g'$ and $h'$. The commutativity conditions for the two equivalences are
\beqn
\label{comrel}
gf_1 = hf_2,\quad gw_1 = hw_2;\qquad\quad	 g'f_2 = h'f_3,\quad g'w_2=h'w_3
\eeqn
In the upper part of the diagram, $k$ and $\widetilde{w}$ were obtained by applying~\ref{L1} to the two wiggly arrows $gw_1=hw_2$ and $g'w_2=h'w_3$. The corresponding commutativity assertion of~\ref{L1} then is $khw_2=\widetilde{w}g'w_2$. By virtue of~\ref{L2}, we can then find the drawn $\widehat{w}$ such that $\widehat{w}kh=\widehat{w}\widetilde{w}g'$. Together with the relations~(\ref{comrel}), this means that the compositions $\widehat{w}kg$ and $\widehat{w}\widetilde{w}h'$ of the morphisms which go up along the sides implement an equivalence between $(f_1,w_1)$ and $(f_3,w_3)$.
\end{proof}

Under a closer look, this argument is actually a special case of the argument used to prove lemma~\ref{l1unique}. In fact, we could also have applied lemma~\ref{l1unique} directly to the two roofs $(h,hw_2)$ and $(g',g'w_2)$, since both are~\ref{L1}-complements of the formal right fraction $\mathrm{id}_{\mathrm{dom}(w_2)}w_2^{-1}$.

\begin{rem}
\label{roofs2cat}
One can also take a $2$-categorical point of view which gives some more intuitive insight on the notion of equivalence of roofs. We get something resembling a 2-category as follows: on the objects of $\mathcal{C}$ we define a $1$-morphism to be a roof in $\mathcal{C}$ with respect to $\mathcal{W}$. For a roof $(f,w)$, we define $\mathrm{dom}((f,w))=\mathrm{dom}(f)$ and $\mathrm{cod}((f,w))=\mathrm{dom}(w)$. A $2$-morphism from a roof $(f_1,w_1)$ to a parallel roof $(f_2,w_2)$ is then defined to be a commutative diagram
\beq
\xymatrix{&{\bullet}\ar@/^1pc/[rr] && {\bullet}&\\
{\bullet}\ar[ru]^{f_1}\ar[rrru]_(.4){f_2} &&&&{\bullet}\ar@{~>}[lllu]^(.4){w_1}\ar@{~>}[lu]_{w_2}}
\eeq
The existence of such a $2$-morphism makes $(f_1,w_1)$ and $(f_2,w_2)$ equivalent; we call such a $2$-morphism an \ind{elementary equivalence}. A $2$-morphism from $(f_1,w_1)$ to $(f_2,w_2)$ can be composed with a $2$-morphism from $(f_2,w_2)$ to $(f_3,w_3)$. This resembles the vertical composition in a 2-category. Now the observation is that two roofs are equivalent if and only if they can be connected by a finite path of $2$-morphisms, where each $2$-morphism is either traversed from its domain to its codomain or in the reverse direction. To see this, note that the third roof $(gf_1,hw_2)$ in the diagram of definition~\ref{roofeq} is connected to each of the other two roofs by a 2-morphism. The other implication direction follows from the transitivity statement of lemma~\ref{trans} and the fact that two parallel roofs connected by a single $2$-morphism are equivalent. Hence lemma~\ref{trans} can also be reinterpreted as a connectivity statement about the category of parallel roofs between some pair of objects.

In what follows, we will define a (weakly associative) composition of $1$-morphisms. A horizontal composition of $2$-morphisms does not seem to exist in general, although it seems related to the upcoming proof that the composition of $1$-morphisms is well-defined up to equivalence.

We end this remark by pointing out again that this 2-categorical picture is a non-rigorous intuition.
\end{rem}

Lemma~\ref{l1unique} also allows the definition of composition for equivalence classes of roofs:

\begin{defn}
\label{compo}
Given two roofs $(f_1,w_1)$ and $(f_2,w_2)$ which are composable in the sense that $\mathrm{dom}(w_1)=\mathrm{dom}(f_2)$, we define their composition as 
\beq
(f_2,w_2)\circ(f_1,w_1)\equiv (\widetilde{f}f_1,\widetilde{w}w_2)
\eeq
where $\widetilde{f}$ and $\widetilde{w}$ in
\beq
\xymatrix{&&{\bullet}&&\\
& {\bullet}\ar[ur]^{\widetilde{f}} && {\bullet}\ar@{~>}[lu]_{\widetilde{w}}&\\
{\bullet}\ar[ru]^{f_1} && {\bullet}\ar@{~>}[lu]^{w_1}\ar[ru]_{f_2} && {\bullet}\ar@{~>}[lu]_{w_2}}
\eeq
were obtained by means of~\ref{L1}.
\end{defn}

Thanks to lemma~\ref{l1unique}, the equivalence class of $(\widetilde{f},\widetilde{w})$ is unique. Therefore, so is the equivalence class of $(\widetilde{f}f_1,\widetilde{w}w_2)$.

\begin{lem}
This composition does not depend on the equivalence class of either of the two roofs.
\end{lem}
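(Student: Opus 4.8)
\emph{Approach.}
The plan is to check that the composite is unchanged, up to equivalence, when either of the two roofs is replaced by an equivalent one while the other is held fixed; the general statement then follows by doing this twice and using transitivity of roof equivalence (Lemma~(\ref{trans})). Moreover, an equivalence $(f_1,w_1)\sim(f_1',w_1')$ is by definition given by morphisms $g,h$ with $gf_1=hf_1'$ and $gw_1=hw_1'\in\mathcal{W}$, so each of $(f_1,w_1)$ and $(f_1',w_1')$ carries a $2$-morphism (in the sense of Remark~(\ref{roofs2cat})) to the common roof $(gf_1,gw_1)=(hf_1',hw_1')$. Hence, using transitivity once more, it suffices to prove: whenever $(f_1',w_1')$ arises from $(f_1,w_1)$ by a single morphism $u$ with $uf_1=f_1'$ and $uw_1=w_1'$, one has $(f_2,w_2)\circ(f_1,w_1)\sim(f_2,w_2)\circ(f_1',w_1')$, and the symmetric statement in the second variable.

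\emph{The first roof.}
Choose $\widetilde{f},\widetilde{w}$ by (L1) with $\widetilde{f}w_1=\widetilde{w}f_2$, so that $(f_2,w_2)\circ(f_1,w_1)=(\widetilde{f}f_1,\widetilde{w}w_2)$, and likewise $\widetilde{f}',\widetilde{w}'$ with $\widetilde{f}'w_1'=\widetilde{w}'f_2$ and $(f_2,w_2)\circ(f_1',w_1')=(\widetilde{f}'f_1',\widetilde{w}'w_2)$. Since $w_1'=uw_1$, the pair $(\widetilde{f}'u,\widetilde{w}')$ still satisfies $(\widetilde{f}'u)w_1=\widetilde{w}'f_2$ with $\widetilde{w}'\in\mathcal{W}$, hence is another admissible choice of $f'$ and $w'$ in (L1) for the pair $(w_1,f_2)$; by Lemma~(\ref{l1unique}), $(\widetilde{f}'u,\widetilde{w}')\sim(\widetilde{f},\widetilde{w})$. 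Next observe that if two parallel roofs are equivalent, then so are the roofs obtained by pre-composing their left legs with a common morphism and their right legs with a common morphism of $\mathcal{W}$ --- the witnessing morphisms are the same, and the new right leg stays in $\mathcal{W}$ by (L0). Pre-composing on the left with $f_1$ and on the right with $w_2\in\mathcal{W}$, and using $uf_1=f_1'$, gives $(\widetilde{f}'f_1',\widetilde{w}'w_2)\sim(\widetilde{f}f_1,\widetilde{w}w_2)$, as wanted.

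\emph{The second roof.}
Now let $(f_2',w_2')$ arise from $(f_2,w_2)$ by a morphism $u$ with $uf_2=f_2'$ and $uw_2=w_2'$; this is the substantive case, in which the above shortcut breaks down. Write $(f_2,w_2)\circ(f_1,w_1)=(\widetilde{f}f_1,\widetilde{w}w_2)$ with $\widetilde{f}w_1=\widetilde{w}f_2$, and $(f_2',w_2')\circ(f_1,w_1)=(\widetilde{f}'f_1,\widetilde{w}'w_2')$ with $\widetilde{f}'w_1=\widetilde{w}'f_2'$. The pair $(\widetilde{f}',\widetilde{w}'u)$ satisfies the (L1)-relation $(\widetilde{w}'u)f_2=\widetilde{f}'w_1$ for $(w_1,f_2)$, but its candidate $\mathcal{W}$-leg $\widetilde{w}'u$ need not lie in $\mathcal{W}$, so Lemma~(\ref{l1unique}) cannot be invoked. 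Instead, apply (L1) to $\widetilde{w}\in\mathcal{W}$ and $\widetilde{w}'u$ --- both with domain $\mathrm{cod}(f_2)$ --- to obtain $v\in\mathcal{W}$ and $q$ with $v\widetilde{w}'u=q\widetilde{w}$. Chaining the relations $\widetilde{f}w_1=\widetilde{w}f_2$, $q\widetilde{w}=v\widetilde{w}'u$, $uf_2=f_2'$ and $\widetilde{f}'w_1=\widetilde{w}'f_2'$ yields $(q\widetilde{f})w_1=(v\widetilde{f}')w_1$, so by (L2) there is $w''\in\mathcal{W}$ with $w''q\widetilde{f}=w''v\widetilde{f}'$. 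Put $a:=w''q$ and $b:=w''v$; then $a\widetilde{f}=b\widetilde{f}'$ and $a\widetilde{w}=w''q\widetilde{w}=w''v\widetilde{w}'u=b\widetilde{w}'u$, so $a(\widetilde{f}f_1)=b(\widetilde{f}'f_1)$ and $a(\widetilde{w}w_2)=b\widetilde{w}'uw_2=b(\widetilde{w}'w_2')$; this common value equals $w''v\widetilde{w}'w_2'$, a composite of four members of $\mathcal{W}$ and hence in $\mathcal{W}$ by (L0). Thus $a,b$ witness $(f_2,w_2)\circ(f_1,w_1)\sim(f_2',w_2')\circ(f_1,w_1)$.

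\emph{Main obstacle.}
The second roof is the only genuine difficulty: there $u$ need not lie in $\mathcal{W}$, so --- in contrast to the first case, where it enters only through the free, non-$\mathcal{W}$ leg of the (L1)-completion and can simply be carried along --- it would have to enter through the $\mathcal{W}$-leg, which is illegitimate; one is forced first to exchange it against $\widetilde{w}$ by a fresh application of (L1), and then to reconcile the two resulting left legs by (L2). Everything else is a rearrangement of identities already recorded, together with the closure of $\mathcal{W}$ under composition (L0).
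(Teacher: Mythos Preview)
Your argument is correct. The reduction to elementary equivalences (2-morphisms) is the same as in the paper, but from there you and the paper diverge.

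The paper treats both factors at once and, crucially, orients the elementary equivalences in the \emph{opposite} direction from yours: after renaming, the connecting morphisms $g_1,g_2$ point \emph{towards} the unprimed roofs, i.e.\ $g_1f'_1=f_1$, $g_1w'_1=w_1$, $g_2f'_2=f_2$, $g_2w'_2=w_2$. One then applies (L1) once to the pair $(w_1,f_2)$, obtaining $\widetilde{f},\widetilde{w}$; the point is that $(\widetilde{f}g_1,\widetilde{w}g_2)$ is again an admissible (L1)-filler for the pair $(w'_1,f'_2)$, so the two composite roofs $(\widetilde{f}f_1,\widetilde{w}w_2)$ and $(\widetilde{f}g_1f'_1,\widetilde{w}g_2w'_2)$ are literally equal. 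This completely bypasses what you call the ``main obstacle'': with the paper's orientation the morphism $g_2$ enters by \emph{post}-composition on the $f_2$-side rather than on the $\mathcal{W}$-side, so there is never any need to push a non-$\mathcal{W}$ morphism through the wiggly leg, and no appeal to (L2) is required.

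Your route, by contrast, keeps the orientation fixed and pays for it with an explicit (L1)+(L2) argument in the second factor --- essentially reproving the relevant instance of Lemma~(\ref{l1unique}) by hand. This is perfectly valid and perhaps more transparent about where each axiom is used, but it is longer; the paper's renaming trick buys a one-line proof at the cost of a slightly less obvious setup.
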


\begin{proof}
For both pairs of roofs, it is sufficient to consider the case that they are connected by an elementary equivalence as described in remark~\ref{roofs2cat}. Thus suppose we are given the lower half of the diagram
\beq
\xymatrix{&&&&{\bullet}&&&&\\
&{\bullet}\ar[rr]^{g_1} && {\bullet}\ar[ur]^{\widetilde{f}} && {\bullet}\ar@{~>}[lu]_{\widetilde{w}} && {\bullet}\ar[ll]_{g_2} &\\
{\bullet}\ar[ru]^{f'_1}\ar[rrru]_(.4){f_1} &&&& {\bullet}\ar@{~>}[lu]_{w_1}\ar@{~>}[lllu]^(.4){w'_1}\ar[ru]^{f_2}\ar[rrru]_(.4){f'_2} &&&& {\bullet}\ar@{~>}[lu]_{w'_2}\ar@{~>}[lllu]^(.4){w_2}}
\eeq
which represents two pairs of elementarily equivalent roofs. After possible renamings $(f_1,w_1)\leftrightarrow (f'_1,w'_1)$ and $(f_2,w_2)\leftrightarrow (f'_2,w'_2)$, we can assume that $g_1$ goes from $\mathrm{cod}(f'_1)$ to $\mathrm{cod}(f_1)$, while $g_2$ similarly points from $\mathrm{cod}(f'_2)$ to $\mathrm{cod}(f_2)$.

Applying~\ref{L1} to the pair $w_1,f_2$ yields $\widetilde{f}$ and $\widetilde{w}$. Then $(\widetilde{f}f_1,\widetilde{w}w_2)$ is a possible roof representing the composition $(f_2,w_2)\circ(f_1,w_1)$. Similarly, $(\widetilde{f}g_1f'_1,\widetilde{w}g_2w'_2)$ is a possible roof representing the composition $(f'_2,w'_2)\circ(f'_1,w'_1)$. By commutativity, these roofs coincide, so in particular they are equivalent.
\end{proof}

\begin{thm}
\label{loc}
If $\mathcal{W}\subseteq\mathcal{C}$ admits a calculus of left fractions, then the category $\mathcal{C}[\mathcal{W}^{-1}]$ can be described as the category with the same objects as $\mathcal{C}$, morphisms equivalence classes of roofs, and composition as defined above. The localization functor $Loc:\mathcal{C}\ra\mathcal{C}[\mathcal{W}^{-1}]$ is given by $f\mapsto (f,\mathrm{id})$.
\end{thm}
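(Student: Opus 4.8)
The plan is to identify the category $\mathcal{R}$ just described --- objects those of $\mathcal{C}$, morphisms the equivalence classes of roofs, composition as in Definition~\ref{compo} --- with $\mathcal{C}[\mathcal{W}^{-1}]$ by checking that $\mathcal{R}$ together with $Loc\colon f\mapsto(f,\mathrm{id})$ satisfies the universal property of Section~\ref{sectionlocalization}. Since an object defined by a universal property is unique up to unique iso, this is enough; in particular one never has to compare $\mathcal{R}$ directly with the explicit path-quotient construction. The work splits into three parts: (i) $\mathcal{R}$ is a category; (ii) $Loc$ is a functor sending $\mathcal{W}$ to isos; (iii) every functor out of $\mathcal{C}$ inverting $\mathcal{W}$ factors uniquely through $Loc$.

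For (i), well-definedness of composition on equivalence classes is already supplied by the two lemmas preceding the theorem together with Lemma~\ref{l1unique}. The identity of an object $A$ is the class of $(\mathrm{id}_A,\mathrm{id}_A)$, and the identity laws hold because in Definition~\ref{compo} the (L1)-completion of a pair one of whose members is an identity may be taken to consist of identities again, returning the original roof. The substantial point is associativity. Given three composable roofs, I would choose (L1)-completions realizing the two bracketings of their triple composite; the two resulting roofs are generally not literally equal, so --- exactly as in the proofs of Lemma~\ref{l1unique} and Lemma~\ref{trans} --- I would apply (L1) once more to their two apexes to make the $\mathcal{W}$-legs coincide, and then (L2) to force the two $\mathcal{C}$-legs to coincide, thereby exhibiting the required equivalence. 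The bookkeeping can be reduced by appealing to Lemma~\ref{l1unique}, which allows each intermediate (L1)-choice to be treated as canonical up to equivalence; even so, this diagram chase is the main obstacle of the proof.

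For (ii): $Loc(\mathrm{id}_A)=(\mathrm{id}_A,\mathrm{id}_A)$ is the identity of $A$ in $\mathcal{R}$, and $Loc(gf)=Loc(g)\circ Loc(f)$ because (L1) applied to the pair $w_1=\mathrm{id}$, $f_2=g$ admits the completion by $\widetilde{f}=g$, $\widetilde{w}=\mathrm{id}$, whose associated composite roof is $(gf,\mathrm{id})$. To see that $Loc(w)=(w,\mathrm{id})$ is an iso for $w\in\mathcal{W}$, I would verify that the roof $(\mathrm{id},w)$ is a two-sided inverse: computing the two composites with the trivial (L1)-completions gives the roofs $(w,w)$ and $(\mathrm{id},\mathrm{id})$ respectively, and $(w,w)$ is equivalent to the identity roof via $g=\mathrm{id}$, $h=w$.

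For (iii), let $F\colon\mathcal{C}\ra\mathcal{D}$ invert $\mathcal{W}$. One first observes that in $\mathcal{R}$ one has $(f,w)=(\mathrm{id},w)\circ(f,\mathrm{id})=Loc(w)^{-1}\circ Loc(f)$, so any functor $\overline{F}$ with $\overline{F}\circ Loc=F$ is forced to send $(f,w)$ to $F(w)^{-1}F(f)$; this yields uniqueness and simultaneously dictates the definition of $\overline{F}$ (equal to $F$ on objects). It then remains to check that this $\overline{F}$ is well-defined and functorial. Well-definedness: if $(f_1,w_1)$ and $(f_2,w_2)$ are equivalent via $g,h$ with $gw_1=hw_2\in\mathcal{W}$, then $F(g)=F(gw_1)F(w_1)^{-1}$ and likewise $F(h)$ are isos, and from $gf_1=hf_2$ and $gw_1=hw_2$ a short cancellation gives $F(w_1)^{-1}F(f_1)=F(gw_1)^{-1}F(gf_1)=F(hw_2)^{-1}F(hf_2)=F(w_2)^{-1}F(f_2)$; since the relation of Definition~\ref{roofeq} is already known to be an equivalence relation, this single-step case suffices. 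Functoriality: $\overline{F}$ plainly preserves identities, $\overline{F}(Loc(f))=F(\mathrm{id})^{-1}F(f)=F(f)$, and the commutativity $\widetilde{w}f_2=\widetilde{f}w_1$ in Definition~\ref{compo} gives $F(\widetilde{w})^{-1}F(\widetilde{f})=F(f_2)F(w_1)^{-1}$, from which $\overline{F}\bigl((f_2,w_2)\circ(f_1,w_1)\bigr)=\overline{F}(f_2,w_2)\circ\overline{F}(f_1,w_1)$ is immediate. With (i)--(iii) established, $\mathcal{R}$ together with $Loc$ has the universal property of the localization and is therefore canonically isomorphic to $\mathcal{C}[\mathcal{W}^{-1}]$.
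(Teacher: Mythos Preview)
Your outline is correct and matches the paper's structure: verify the category axioms, show $Loc$ is a functor inverting $\mathcal{W}$, then establish the universal property. Parts (ii) and (iii) are essentially the paper's arguments; in (iii) you check well-definedness of $\overline{F}$ directly against Definition~\ref{roofeq} rather than reducing to the elementary equivalences of Remark~\ref{roofs2cat} as the paper does, which is a harmless variant.

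The one genuine divergence is associativity. You propose to compute the two bracketings separately and then compare the resulting roofs via the (L1)$+$(L2) pattern of Lemmas~\ref{l1unique} and~\ref{trans}. This can be made to work, but it is heavier than your sketch suggests: after equalizing the $\mathcal{W}$-legs with one application of (L1), a \emph{single} application of (L2) will not equalize the $\mathcal{C}$-legs, because at the leftmost object there is no $\mathcal{W}$-morphism available to precompose with (unlike the situation in Lemma~\ref{l1unique}, where the original $w$ plays exactly that role). One must peel back through the intermediate apexes, invoking (L2) once at each stage. The paper bypasses all of this: it stacks a single pyramid over the three given roofs by three applications of (L1) alone, and then observes that the roof running along the outer edges is \emph{simultaneously} a legitimate representative for both bracketings, since at every composition step one is free to choose the (L1)-completion and one simply takes the completions already sitting in the pyramid. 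Associativity thus follows without any comparison argument and without ever using (L2). Your route is valid but more laborious; the paper's is the trick worth knowing.
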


\begin{proof}
Associativity of composition follows from the symbolic diagram
\beq
\xymatrix{&&& {\bullet} &&&\\
&&{\bullet}\ar[ru] && {\bullet}\ar@{~>}[lu] &&\\
&{\bullet}\ar[ur] && {\bullet}\ar@{~>}[lu]\ar[ru] && {\bullet}\ar@{~>}[lu]&\\
{\bullet}\ar[ru] && {\bullet}\ar@{~>}[lu]\ar[ru] && {\bullet}\ar@{~>}[lu]\ar[ru] && {\bullet}\ar@{~>}[lu]}
\eeq
where the three lower roofs are those to be composed; the rest of the diagram is obtained by three applications of~\ref{L1}.
Then the large roof from the left to the right formed by composing the morphisms along the sides is a representative for the composition of the three lower roofs in both possible ways of bracketing. This shows associativity. Furthermore, the equivalence classes of the roofs $(\mathrm{id},\mathrm{id})$ obviously function as identity morphisms. Therefore, taking equivalence classes of roofs as morphisms on $\mathrm{Obj}(\mathcal{C})$ gives a well-defined category $\mathcal{C}[\mathcal{W}^{-1}]$.

Concerning functoriality, $Loc$ preserves identities by definition, and preserves composition by the diagram
\beq
\xymatrix{&&{\bullet}&&\\
&{\bullet}\ar[ur]^g && {\bullet}\ar@{=}[lu]\\
{\bullet}\ar[ur]^f && {\bullet}\ar@{=}[lu]\ar[ru]^g && {\bullet}\ar@{=}[lu]}
\eeq
which says that the roof $(gf,\mathrm{id})$ is a representative for the equivalence class of $(g,\mathrm{id})\circ(f,\mathrm{id})$.

Under $Loc$, the image of some $w\in\mathcal{W}$ is $(w,\mathrm{id})$, and this image has as its inverse element the class of $(\mathrm{id},w)$ since $(w,w)$ is a representative of both $(\mathrm{id},w)\circ(w,\mathrm{id})$ and $(w,\mathrm{id})\circ(\mathrm{id},w)$, and there is an obvious equivalence $(w,w)\sim (\mathrm{id},\mathrm{id})$. In particular, $Loc$ maps $\mathcal{W}$ to isos.

It remains to check universality. Suppose we have some functor $F:\mathcal{C}\ra\mathcal{D}$ which maps $\mathcal{W}$ to isos. First we need to show that $F$ uniquely extends to roofs. By the desired commutativity of~(\ref{univ}), any such extension has to map the roof $(f,\mathrm{id})$ to $F(f)$. Similarly, since the class $[(\mathrm{id},w)]$ is the inverse of the class $[(w,\mathrm{id})]$, any such extension maps $(\mathrm{id},w)$ to $F(w)^{-1}$. But now $(f,w)$ is a representative of the composition $(\mathrm{id},w)\circ(f,\mathrm{id})$, so we need $(f,w)\mapsto F(w)^{-1}F(f)$.

We still have to check that this assignment is well-defined on equivalence classes and that it is functorial. Consider an elementary equivalence of roofs as in remark~\ref{roofs2cat},
\beq
\xymatrix{&{\bullet}\ar@/^1pc/[rr]^g && {\bullet}&\\
{\bullet}\ar[ru]^{f_1}\ar[rrru]_(.4){f_2} &&&&{\bullet}\ar@{~>}[lllu]^(.4){w_1}\ar@{~>}[lu]_{w_2}}
\eeq
Then in $\mathcal{D}$ we have $F(w_2)=F(g)F(w_1)$, so $F(w_1)^{-1}=F(w_2)^{-1}F(g)$. Then the calculation
\beqn
F(w_1)^{-1}F(f_1)=F(w_2)^{-1}F(g)F(f_1)=F(w_2)^{-1}F(f_2)
\eeqn
shows that the equivalent roofs get mapped to identical morphisms in $\mathcal{D}$. 

Functoriality follows by very similar reasoning. Given a pair of composable roofs together with their composition as in definition~\ref{compo}, it holds that 
\beqn
F(\widetilde{f})F(w_1)=F(\widetilde{w})F(f_2)
\eeqn
so that we get
\beqn
\label{commu}
F(\widetilde{w})^{-1}F(\widetilde{f})=F(f_2)F(w_1)^{-1}
\eeqn
Applying first the functor and composing the roofs afterwards yields 
\beqn
\label{comproofs}
\left(F(w_2)^{-1}F(f_2)\right)\circ\left(F(w_1)^{-1}F(f_1)\right)
\eeqn
while for the other direction we end up with $F(\widetilde{w}w_2)^{-1}F(\widetilde{f}f_1)$, which coincides with~(\ref{comproofs}) by~(\ref{commu}) and functoriality of $F$.
\end{proof}

If $\mathcal{W}\subseteq\mathcal{C}$ satisfies~\ref{L0} (which is self-dual) and additionally the conditions (R1) and (R2), which are defined to be the category-theoretic duals of~\ref{L1} and~\ref{L2}, then we say that $\mathcal{W}$ allows a \ind{calculus of right fractions}. In this case, the dual theorem holds: $\mathcal{C}[\mathcal{W}^{-1}]$ can be described in terms of equivalence classes of roofs $(w,f)$ which now represent right fractions $fw^{-1}$. If all five of the (L$\ast$) and (R$\ast$) conditions hold, we say that $\mathcal{W}$ admits a \textbf{calculus of left and right fractions}.

\section{Weakening the requirements}
\label{sectionweakening}

In~\cite{Hig}, a notion of category of fractions is introduced which, on first sight, is seemingly weaker in its premises than the one discussed in the previous section. While keeping~\ref{L0} and~\ref{L1}, the axiom~\ref{L2} gets replaced by the condition

\begin{enumerate}[label=(L\theenumi')]
\setcounter{enumi}{1}
\item
\label{L2p}
Denote by $\mathcal{W}_L$ the class of morphisms in $\mathcal{C}$ generated by $\mathcal{W}$ and all split monos in $\mathcal{C}$. Then given $w\in\mathcal{W}$ and parallel morphisms $f_1,f_2$ such that $f_1w=f_2w$, there exists $w'\in\mathcal{W}_L$ such that $w'f_1=w'f_2$.
\beq
\xymatrix{{\bullet}\ar@{~>}[r]^w & {\bullet}\ar@/^/[r]^{f_1}\ar@/_/[r]_{f_2} & {\bullet}\ar@{~>}[r]^{w'} & {\bullet}}
\eeq
\end{enumerate}

\begin{prop}
\label{higsonfractions}
Given $w'\in\mathcal{W}_L$, we can find $k\in\mathcal{C}$ such that $kw'\in\mathcal{W}$.
\end{prop}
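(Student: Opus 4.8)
The plan is to show that the class $\mathcal{G}$ of all morphisms $m$ of $\mathcal{C}$ for which there exists some $k$ with $km\in\mathcal{W}$ --- call these the \emph{good} morphisms --- contains all of $\mathcal{W}_L$. Since $\mathcal{W}_L$ is by definition the class of morphisms generated under composition by $\mathcal{W}$ together with all split monos, it suffices to check three things: (i) $\mathcal{W}\subseteq\mathcal{G}$, (ii) every split mono is good, and (iii) $\mathcal{G}$ is closed under composition. Items (i) and (ii) are immediate: for $w\in\mathcal{W}$ take $k=\mathrm{id}$, and for a split mono $s$ with left inverse $r$ we have $rs=\mathrm{id}\in\mathcal{W}$ by (L0), so $r$ witnesses $s\in\mathcal{G}$.

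The content is (iii), which I would reduce to the single key lemma: if $v\in\mathcal{W}$ and $m\in\mathcal{G}$ with $vm$ defined, then $vm\in\mathcal{G}$. To prove it, pick $k$ with $km\in\mathcal{W}$ and apply (L1) to $v$ together with $k$; this is legitimate since $\mathrm{dom}(k)=\mathrm{cod}(m)=\mathrm{dom}(v)$. It yields some $v'\in\mathcal{W}$ and a morphism $f'$ with $f'v=v'k$. Then $f'(vm)=(f'v)m=(v'k)m=v'(km)$, and the right-hand side lies in $\mathcal{W}$ because $\mathcal{W}$ is closed under composition by (L0); hence $f'$ witnesses $vm\in\mathcal{G}$. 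Granting the lemma, closure of $\mathcal{G}$ under composition is bookkeeping: for composable $m,n\in\mathcal{G}$, choose $k$ with $kn\in\mathcal{W}$; then $k(nm)=(kn)m$ with $kn\in\mathcal{W}$ and $m\in\mathcal{G}$, so the lemma provides $j$ with $j(kn)m\in\mathcal{W}$, and $jk$ witnesses $nm\in\mathcal{G}$.

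I do not expect a serious obstacle; the entire argument rests on one application of (L1). The point that needs attention is the \emph{direction} in which (L1) is invoked --- we feed it $v$ together with the ``weak left inverse'' $k$ of $m$, not $v$ together with $m$ itself --- and the accompanying check that its domain hypothesis holds, which is precisely the identity $\mathrm{dom}(k)=\mathrm{cod}(m)=\mathrm{dom}(v)$. It is also worth observing that the proof uses only (L0) and (L1) and never (L2) or (L2'); this is as it should be, since the proposition is meant to hold already under Higson's weakened axioms.
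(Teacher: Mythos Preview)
Your proof is correct; the one application of (L1) you make is exactly the mechanism the paper uses, and your check that $\mathrm{dom}(k)=\mathrm{cod}(m)=\mathrm{dom}(v)$ is the relevant hypothesis verification. You are also right that only (L0) and (L1) are needed.

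The organization, however, differs from the paper's. The paper argues concretely: it writes $w'\in\mathcal{W}_L$ in the normal form $w_nm_n\cdots w_1m_1$ with each $m_j$ a split mono and each $w_j\in\mathcal{W}$, isolates the prototype case $w'=\widehat{w}m$ (a single $\mathcal{W}$-morphism followed by a single split mono), applies (L1) to $\widehat{w}$ together with a chosen left inverse $e$ of $m$ to produce $k$ with $kw'\in\mathcal{W}$, and then iterates this step along the word from the left. Your argument abstracts this into a closure statement: the class $\mathcal{G}$ of ``good'' morphisms contains the generators and is closed under composition, with the key lemma ``$v\in\mathcal{W}$, $m\in\mathcal{G}$ $\Rightarrow$ $vm\in\mathcal{G}$'' doing the work. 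Your lemma is strictly more general than the paper's prototype case (it allows arbitrary $m\in\mathcal{G}$, not just split monos), and it makes the induction implicit in the phrase ``class generated by'' rather than explicit in a word decomposition. The trade-off is that the paper's version is more hands-on and perhaps easier to visualize diagrammatically, while yours avoids having to choose a normal form for elements of $\mathcal{W}_L$ at all.
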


\begin{proof}
Let us consider the cases how $w'$ might look like, one by one and in increasing order of difficulty. If already $w'\in\mathcal{W}$, we are done since we can take $k=\mathrm{id}_{\mathrm{cod}(w')}$. If $w'=m\widehat{w}$, where $m$ is a split mono and $\widehat{w}\in\mathcal{W}$, we can take $k$ to be a left-inverse of $m$, so we are done as well. 

The only non-trivial type of situation occurs $w'$ is a composition of morphisms in $\mathcal{W}$ and split monos such that morphisms of $\mathcal{W}$ come after split monos. The prototype for this situation is a morphism like $w'=\widehat{w}m$, with $\widehat{w}\in\mathcal{W}$ and $m$ a split mono. By assumption, $m$ has a left inverse $e$, which means $em=\mathrm{id}$. Now apply~\ref{L1} to the pair $\widehat{w},e$,
\beq
\xymatrix@+5pt{{\bullet}\ar@{~>}[r]^{\widehat{w}}\ar[d]^e & {\bullet}\ar[d]^k\\
{\bullet}\ar@{~>}[r]_{\widetilde{w}}\ar@/^1pc/[u]^m\ar@{~>}[ru]|{\phantom{w}w'} & {\bullet}}
\eeq
which gives the morphism $k$ and some morphism $\widetilde{w}\in\mathcal{W}$. The commutativity assertion of~\ref{L1} states in this case $\widetilde{w}e=k\widehat{w}$, so after composing with $m$ on the right we have $\widetilde{w}=\widetilde{w}em=k\widehat{w}m=kw'\in\mathcal{W}$.

Now for the general case. By definition of $\mathcal{W}_L$ and~\ref{L0}, our $w'$ is of the form
\beqn
w'=w_nm_n\cdots w_1m_1
\eeqn
where the $m_j$ are split monos and $w_j\in\mathcal{W}$. Starting from the left, we can iteratively apply the previous argument and use~\ref{L0} to compose the morphisms in $\mathcal{W}$ to a single morphism in $\mathcal{W}$, until we have only a single morphism in $\mathcal{W}$ left.
\end{proof}

\begin{cor}
\begin{enumerate}[label=(\alph{enumi})]
\label{splitfractions}
\item Given~\ref{L0} and~\ref{L1}, the assertions~\ref{L2} and~\ref{L2p} are equivalent.
\item Two roofs $(f_1,w_1)$ and $(f_2,w_2)$ are equivalent if and only if there is a diagram
\beq
\xymatrix{ & & {\bullet} & & \\
& {\bullet}\ar[ur]^g & & {\bullet}\ar[ul]_h & \\
{\bullet}\ar[ur]^{f_1}\ar[urrr]_{f_2} & & & & {\bullet} \ar@{~>}[ulll]^{w_1} \ar@{~>}[ul]_{w_2}}
\eeq
where now we only demand commutativity and $hw_2\in\mathcal{W}_L$ (instead of $hw_2\in\mathcal{W}$).
\end{enumerate}
\end{cor}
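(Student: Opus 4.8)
The plan is to obtain both parts as direct corollaries of Proposition~\ref{higsonfractions}, which carries all the weight.

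For part~(a), the implication (L2)$\Rightarrow$(L2') is immediate because $\mathcal{W}\subseteq\mathcal{W}_L$, so any $w'$ furnished by (L2) already qualifies for (L2'). For the reverse implication I would start from (L2') applied to an instance $w\in\mathcal{W}$, $f_1,f_2$ with $f_1w=f_2w$, obtaining $w'\in\mathcal{W}_L$ with $w'f_1=w'f_2$; then use Proposition~\ref{higsonfractions} to produce $k$ with $kw'\in\mathcal{W}$, and observe that $(kw')f_1=k(w'f_1)=k(w'f_2)=(kw')f_2$, so $kw'\in\mathcal{W}$ witnesses (L2).

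For part~(b), the ``only if'' direction is nothing but the observation that the equivalence diagram of Definition~\ref{roofeq} has $gw_1=hw_2\in\mathcal{W}\subseteq\mathcal{W}_L$, hence is already of the weaker displayed form. For the ``if'' direction, given $g,h$ making the two squares commute with $hw_2\in\mathcal{W}_L$ (so also $gw_1=hw_2\in\mathcal{W}_L$ by commutativity), I would choose $k$ with $k(hw_2)\in\mathcal{W}$ via Proposition~\ref{higsonfractions} and replace $g,h$ by $kg,kh$. Since $k$ is only post-composed, commutativity of both squares is preserved, so $(kg)f_1=(kh)f_2$ and $(kg)w_1=(kh)w_2=k(hw_2)\in\mathcal{W}$, which is precisely an equivalence of roofs in the original sense of~(\ref{roofeq}).

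I do not expect any genuine obstacle here once Proposition~\ref{higsonfractions} is in hand; the one point that needs a moment's care in~(b) is that left-multiplying the connecting morphisms by $k$ does not destroy the commutativity of the roof-equivalence squares — and it does not, because $k$ appears only after $g$ and $h$.
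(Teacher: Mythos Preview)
Your argument is correct and is exactly the intended one: the paper's proof consists of the single sentence ``These are both immediate consequences of the previous proposition,'' and what you have written is precisely the spelling-out of that immediacy via post-composition with the $k$ supplied by Proposition~\ref{higsonfractions}.
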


\begin{proof}
These are both immediate consequences of the previous proposition.
\end{proof}

\begin{rem}
As already noticed in~\cite[1.2.4]{Hig}, when~\ref{L0} holds the axiom~\ref{L1} is in fact equivalent to the variant where $w\in\mathcal{W}_L$:

\begin{enumerate}[label=(L\theenumi')]
\item\label{L1p} Given any $w\in\mathcal{W}_L$ and an arbitrary morphism $f$ with $\mathrm{dom}(f)=\mathrm{dom}(w)$, we can find $w'\in\mathcal{W}$ and some morphism $f'$ with $\mathrm{cod}(f')=\mathrm{cod}(w')$, such that the diagram
\beq
\xymatrix{{}{\bullet}\ar@{~>}[r]^w\ar[d]_f & {}{\bullet}\ar[d]^{f'} \\
{\bullet}\ar@{~>}[r]_{w'} & {\bullet}}
\eeq
commutes.
\end{enumerate}

Clearly,~\ref{L1} is trivially implied by this. For the other implication direction, by remark~\ref{generated} it is sufficient to show that~\ref{L1p} holds if $w$ is any split mono as in the diagram
\beq
\xymatrix@+5pt{{\bullet}\ar[r]_w\ar[d]_f & {\bullet}\ar[d]^{fe}\ar@/_1pc/[l]_e\\
{\bullet}\ar@{=}[r] & {\bullet}}
\eeq
with $e$ some left-inverse of $w$. Then by~\ref{L0} we have $w'\equiv\mathrm{id}_{\mathrm{cod}(f)}\in\mathcal{W}$, so together with $f'\equiv fe$ this does the job; commutavity $few=f$ holds since $e$ is left-inverse to $w$.
\end{rem}

\section{Additive categories of fractions}
\label{sectionadditive}

Often, the working mathematician deals with additive categories. In particular, they may want to do localization completely within the framework of additive categories. In other words, given an additive category $\mathcal{C}$ and a class of ``moral isomorphisms'' $\mathcal{W}$ in $\mathcal{C}$, is there an additive category $\mathcal{C}[\mathcal{W}^{-1}]$ and an additive localization functor $Loc:\mathcal{C}\ra\mathcal{C}[\mathcal{W}^{-1}]$ which maps $\mathcal{W}$ to isos and is the universal additive functor with this property? And if yes, how can this localization be constructed?

For simplicity, we consider only the category of fractions case. Then, in fact, the localization constructed in theorem~\ref{loc} already \emph{is} additive. Inituitively, the reason is that one can find a ``common denominator'' for pairs of roofs representing parallel morphisms in $\mathcal{C}[\mathcal{W}^{-1}]$. The purpose of this section is to turn this intuitive explanation into a formal proof.

In the following, $\mathcal{C}$ is an additive category, and $\mathcal{W}\subseteq\mathcal{C}$ is a class of morphisms satisfying~\ref{L0},~\ref{L1} and~\ref{L2} (or the alternatives~\ref{L1p} and~\ref{L2p} discussed in the previous section).

We start by constructing ``common denominators'' and using them to define an addition operation on equivalence classes of roofs.

Given parallel roofs $(f_1,w_1)$ and $(f_2,w_2)$, we apply~\ref{L1} to the pair $w_1$, $w_2$ and obtain a diagram
\beqn
\begin{split}
\label{cd}
\xymatrix{&&{\bullet} &&\\
& {\bullet}\ar[ru]^g && {\bullet}\ar@{~>}[lu]_{\widetilde{w}}&\\
{\bullet}\ar[ru]^{f_1}\ar[rrru]_(.4){f_2} &&&& {\bullet}\ar@{~>}[lu]_{w_2}\ar@{~>}[lllu]^(.4){w_1}}
\end{split}
\eeqn
which commutes only in the sense that $gw_1=\widetilde{w}w_2$. There is an equivalence $(f_1,w_1)\sim(gf_1,\widetilde{w}w_2)$, and similarly $(f_2,w_2)\sim(\widetilde{w}f_2,\widetilde{w}w_2)$. Thus we have identified $\widetilde{w}w_2$ as a ``common denominator''. Now we can define the sum of $(f_1,w_1)$ and $(f_2,w_2)$ as
\beqn
\label{defnadd}
(f_1,w_1)+(f_2,w_2)\equiv (gf_1+\widetilde{w}f_2,\widetilde{w}w_2) \:.
\eeqn
It needs to be checked that the class of $(gf_1+\widetilde{w}f_2,\widetilde{w}w_2)$ does not depend on the particular choice of $g$ and $\widetilde{w}$. Thanks to lemma~\ref{l1unique}, the class $[(g,\widetilde{w})]$ is well-defined by $w_1$ and $w_2$. Now if $(g',\widetilde{w}')$ is another choice connected to $(g,\widetilde{w})$ by an elementary equivalence $h$, then we have the diagram
$$
\xymatrix{&&{\bullet}\ar@/^1pc/[rr]^h && \bullet \\
& {\bullet}\ar[ru]^g \ar[rrru]_{g'} &&&& {\bullet}\ar@{~>}[lllu]^{\widetilde{w}} \ar@{~>}[lu]_{\widetilde{w}'} &\\
{\bullet}\ar[ru]^{f_1}\ar[rrrrru]_(.4){f_2} &&&&&& {\bullet}\ar@{~>}[lu]_{w_2}\ar@{~>}[lllllu]^(.4){w_1}}
$$
which commutes only in the sense that $gw_1=\widetilde{w}w_2$, $g'w_1=\widetilde{w}'w_2$, $hg=g'$ and $h\widetilde{w}=\widetilde{w}'$. The equation $g'f_1+\widetilde{w}'f_2=h(gf_1+\widetilde{w}f_2)$ shows that $h$ likewise implements an equivalence
$$
(g'f_1+\widetilde{w}'f_2,\widetilde{w}'w_2)\sim (gf_1+\widetilde{w}f_2,\widetilde{w}w_2) \:,
$$
as was to be shown.

While it has been proven that the definition~(\ref{defnadd}) produces a well-defined class of roofs from every pair of roofs, it is still unclear whether the sum depends on the particular representatives of the summands or only on their classes.

\begin{lem}
The class of the sum only depends on the classes of the summands and not on the particular representatives.
\end{lem}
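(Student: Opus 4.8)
The plan is to imitate the proof of the preceding lemma: reduce to an elementary move on a single summand, and then run two short diagram chases, one per summand, whose only non-formal ingredient is Lemma~\ref{l1unique}. For the reduction, observe that since equivalence of roofs is transitive (Lemma~\ref{trans}) it suffices to show that replacing \emph{one} of the two summands by an equivalent roof leaves the class of the sum unchanged; the general statement then follows by changing first the one and then the other. By Remark~\ref{roofs2cat} any equivalence of roofs is a finite zig-zag of elementary equivalences, so I may assume the new summand is joined to the old one by a single $2$-morphism; and since the assertion to be proved (equality of two classes of sums) is symmetric in the old and the new roof, I may choose the orientation of that $2$-morphism. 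Writing $(f_1,w_1)$ and $(f_2,w_2)$ for the summands, I will therefore handle exactly two cases: replacing $(f_1,w_1)$ by $(\bar f_1,\bar w_1)$ where $(f_1,w_1)=(k\bar f_1,\,k\bar w_1)$ for some morphism $k$, and replacing $(f_2,w_2)$ by $(\bar f_2,\bar w_2)$ where $(f_2,w_2)=(k\bar f_2,\,k\bar w_2)$.

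In the first case, let $g$ and $\widetilde w\in\mathcal{W}$ be obtained by applying (L1) to $w_1,w_2$ as in Lemma~\ref{common_den}, so that $g w_1=\widetilde w w_2$ and $(f_1,w_1)+(f_2,w_2)=[(gf_1+\widetilde w f_2,\,g w_1)]$. Then $(gk,\widetilde w)$ is itself a valid choice in (L1) applied to $\bar w_1,w_2$, because $gk\cdot\bar w_1=g w_1=\widetilde w w_2$ and $\widetilde w\in\mathcal{W}$; so by Lemma~\ref{l1unique} I may use it to compute $(\bar f_1,\bar w_1)+(f_2,w_2)=[(gk\bar f_1+\widetilde w f_2,\,gk\bar w_1)]=[(gf_1+\widetilde w f_2,\,g w_1)]$. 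In this case the two sums are represented by literally the same roof.

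The second case is the one that needs an actual idea, since now the denominator $w_2$ of the roof being modified is part of the (L1)-datum that defines the sum, so that datum cannot be reused directly. Let $\bar g$ and $\bar{\widetilde w}\in\mathcal{W}$ be obtained by applying (L1) to $w_1,\bar w_2$, so $\bar g w_1=\bar{\widetilde w}\bar w_2$ and $(f_1,w_1)+(\bar f_2,\bar w_2)=[(\bar g f_1+\bar{\widetilde w}\bar f_2,\,\bar g w_1)]$. Now apply (L1) once more, to the pair $\bar{\widetilde w},k$ (their common domain is $\mathrm{cod}(\bar w_2)$), obtaining $v\in\mathcal{W}$ and a morphism $h$ with $vk=h\bar{\widetilde w}$. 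Then $v w_2=vk\bar w_2=h\bar{\widetilde w}\bar w_2=h\bar g w_1$, so $(h\bar g,\,v)$ is a valid choice in (L1) applied to $w_1,w_2$, and Lemma~\ref{l1unique} gives $(f_1,w_1)+(f_2,w_2)=[(h\bar g f_1+v f_2,\,h\bar g w_1)]$. Substituting $f_2=k\bar f_2$ and then $vk=h\bar{\widetilde w}$, and using additivity of $\mathcal{C}$ to factor $h$ out, this representing roof becomes $(h(\bar g f_1+\bar{\widetilde w}\bar f_2),\,h(\bar g w_1))$. Finally $h\cdot\bar g w_1=h\bar{\widetilde w}\bar w_2=vk\bar w_2=v w_2$ lies in $\mathcal{W}$ by (L0) (and $\bar g w_1=\bar{\widetilde w}\bar w_2$ lies in $\mathcal{W}$ as well), so post-composition with $h$ is an elementary equivalence between $(\bar g f_1+\bar{\widetilde w}\bar f_2,\,\bar g w_1)$ and $(h(\bar g f_1+\bar{\widetilde w}\bar f_2),\,h(\bar g w_1))$; hence $(f_1,w_1)+(f_2,w_2)=(f_1,w_1)+(\bar f_2,\bar w_2)$, as needed.

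I expect the only genuine obstacle to be the step in the second case: recognizing that one must invest one further application of (L1), to the pair $\bar{\widetilde w},k$, before Lemma~\ref{l1unique} becomes applicable, and then tracking via (L0) which composite lands back in $\mathcal{W}$; the rest is bookkeeping, leaning throughout on additivity of $\mathcal{C}$ to pull the connecting morphism out of a sum. One could alternatively run the whole argument through the ``common denominator'' description of the sum from Lemma~\ref{common_den}, which is manifestly symmetric in the two summands and would let one dispatch both cases at once, at the cost of essentially the same computation.
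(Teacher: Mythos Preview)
Your proof is correct, and your Case~1 is exactly the paper's argument: orient the elementary equivalence so that $(f_1,w_1)=(k\bar f_1,k\bar w_1)$, reuse the (L1)-datum $(g,\widetilde w)$ for $(w_1,w_2)$ as the datum $(gk,\widetilde w)$ for $(\bar w_1,w_2)$, and observe that the two sums are represented by literally the same roof. The paper writes out only this case and leaves the second summand implicit.

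You go further and treat the second summand explicitly, correctly noticing that the naive reuse of the (L1)-datum fails because $w_2$ itself is what changes. Your remedy---one extra application of (L1) to the pair $(\bar{\widetilde w},k)$, then invoking Lemma~\ref{l1unique} with the datum $(h\bar g,v)$, and finally using (L0) to see $h\bar g w_1=vw_2\in\mathcal{W}$ so that post-composition by $h$ is an honest roof equivalence---is clean and works. So your argument follows the paper's approach but is more thorough where the paper is silent; the alternative you sketch in your last sentence (working symmetrically through the common-denominator description) is another legitimate way to avoid splitting into two cases.
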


\begin{proof}
Still using the same notation, it is sufficient to consider an elementary equivalence between $(f_1,w_1)$ and some $(f'_1,w'_1)$:
\beq
\xymatrix{&{\bullet}\ar@/^1pc/[rr]^h && {\bullet}&\\
{\bullet}\ar[ru]^{f'_1}\ar[rrru]_(.4){f_1} &&&&{\bullet}\ar@{~>}[lllu]^(.4){w'_1}\ar@{~>}[lu]_{w_1}}
\eeq
Then taking the common denominator of $(f_1,w_1)$ and $(f_2,w_2)$ as above yields the partially commutative diagram
\beq
\xymatrix{&&&{\bullet}\\
{\bullet}\ar@/^1pc/[rr]^h && {\bullet}\ar[ru]^g && {\bullet}\ar@{~>}[lu]_{\widetilde{w}}\\
{\bullet}\ar[u]^{f'_1}\ar[rru]^(.3){f_1}\ar[rrrru]_(.35){f_2} &&&&{\bullet}\ar@{~>}[llllu]^(.3){w'_1}\ar@{~>}[llu]_(.35){w_1}\ar@{~>}[u]_{w_2}}
\eeq
Now the sum of $(f_1,w_1)$ and $(f_2,w_2)$ is the class of
\beqn
\label{sum1}
(gf_1+\widetilde{w}f_2,\widetilde{w}w_2)
\eeqn
while the sum of $(f'_1,w'_1)$ and $(f_2,w_2)$ is the class of
\beqn
(ghf'_1+\widetilde{w}f_2,\widetilde{w}w_2)
\eeqn
which coinides with~(\ref{sum1}) by commutativity of the diagram.
\end{proof}

\begin{thm}
\label{additiveloc}
Suppose $\mathcal{C}$ is additive and allows a calculus of left fractions with respect to $\mathcal{W}$. Then the category of fractions $\mathcal{C}[\mathcal{W}^{-1}]$ is additive.
\end{thm}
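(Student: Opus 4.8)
The plan is to verify the axioms of an additive category for $\mathcal{C}[\mathcal{W}^{-1}]$ one by one, using the addition of roofs already defined via common denominators. First I would check that $Loc$ is an additive functor, which amounts to showing $Loc(f+g) = Loc(f) + Loc(g)$: since $Loc(f) = (f,\mathrm{id})$, $Loc(g) = (g,\mathrm{id})$ already share the common denominator $\mathrm{id}$, the defined sum is $(f+g,\mathrm{id}) = Loc(f+g)$, so this is immediate. This also pins down the zero morphism of $\mathcal{C}[\mathcal{W}^{-1}]$ as $(0,\mathrm{id})$, the image of the zero morphism of $\mathcal{C}$.

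Next I would establish that each hom-set is an abelian group. Commutativity and associativity of $+$ follow by bringing all roofs involved over a common denominator — for associativity of $(f_1,w_1) + (f_2,w_2) + (f_3,w_3)$, iterated application of Lemma~\ref{common_den} (or a single three-fold (L1) argument, as in the associativity diagram of~(\ref{loc})) lets one arrange $w_1 = w_2 = w_3 = w'$, after which associativity is inherited from $\mathcal{C}$. The neutral element is $(0,\mathrm{id})$: given $(f,w)$, bring it and $(0,\mathrm{id})$ to a common denominator $w'$; the representative of $(0,\mathrm{id})$ over $w'$ is $(0,w')$, so the sum is $(f' + 0, w') = (f',w') \sim (f,w)$. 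The additive inverse of $(f,w)$ is $(-f,w)$, using the negation available in $\mathcal{C}$; again this is well-defined on classes by the same common-denominator bookkeeping already carried out in the lemmas preceding the theorem.

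Then I would check bilinearity of composition, i.e. that $\circ$ distributes over $+$ on both sides. For post-composition, given $(f_1,w_1), (f_2,w_2)$ with common denominator $w'$ and a further roof $(g,v)$, one computes $(g,v) \circ \big((f_1,w') + (f_2,w')\big)$ and $(g,v)\circ(f_1,w') + (g,v)\circ(f_2,w')$; performing the (L1)-completion of $w'$ against $g$ once and reusing it for all three composites shows both sides are represented by the same roof, the distributivity in the numerator being inherited from $\mathcal{C}$. Pre-composition is handled symmetrically, completing $v$ against $f_1$ and $f_2$; here one may first use Lemma~\ref{common_den} to assume the two roofs being composed on the right already share a denominator. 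I expect this bilinearity step to be the main technical obstacle, since it requires choosing the (L1)-completions compatibly across three roofs at once and then tracking several commuting squares; the argument is essentially a more elaborate version of the well-definedness proof for addition, but the bookkeeping is heavier.

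Finally I would produce finite biproducts. The natural candidate is that the biproduct of objects $A$ and $B$ in $\mathcal{C}[\mathcal{W}^{-1}]$ is $A \oplus B$ with inclusions and projections the images under $Loc$ of those in $\mathcal{C}$. Since $Loc$ is additive and is the identity on objects, it sends the biproduct identities $p_A i_A + p_B i_B = \mathrm{id}_{A\oplus B}$, $p_A i_A = \mathrm{id}_A$, $p_B i_B = \mathrm{id}_B$, $p_A i_B = 0$, $p_B i_A = 0$ in $\mathcal{C}$ to the corresponding identities in $\mathcal{C}[\mathcal{W}^{-1}]$; a biproduct is characterized by exactly these equations together with the abelian-group structure on hom-sets, so $A \oplus B$ remains a biproduct. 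This also gives a zero object (the empty biproduct, i.e. the image of the zero object of $\mathcal{C}$). With hom-sets abelian groups, composition bilinear, and finite biproducts present, $\mathcal{C}[\mathcal{W}^{-1}]$ is additive and $Loc$ is additive; universality of $Loc$ among additive functors inverting $\mathcal{W}$ then follows from the universal property in~(\ref{loc}) together with the fact that the unique factoring functor is automatically additive, being determined on the additive subcategory $Loc(\mathcal{C})$ which generates $\mathcal{C}[\mathcal{W}^{-1}]$ under the operations just verified.
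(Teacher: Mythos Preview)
Your proposal is correct and follows essentially the same strategy as the paper: verify the preadditive structure on hom-sets via the common-denominator addition, then transport biproducts and the zero object along the additive functor $Loc$ using the equational characterization of biproducts. You are in fact more thorough than the paper, which never mentions bilinearity of composition; your explicit (if sketched) treatment of that step fills a point the paper leaves silent, and your closing remark on the additive universal property is an extra that the paper does not address.
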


\begin{proof}
A category is additive if it is preadditive, has a zero object, and has a biproduct for every pair of objects.

It was already shown how to add equivalence classes of roofs and that this operation is well-defined. Its associativity can be seen from a diagram of the symbolic form
$$
\xymatrix{&&& {\bullet} &&&\\
&& \bullet\ar[ru] && \bullet\ar@{~>}[ul] \\
& {\bullet}\ar[ru] && {\bullet}\ar@{~>}[lu]\ar[ru] && \ar@{~>}[ul] \\
{\bullet}\ar[ru]\ar[rrru]\ar[rrrrru] &&&&&& {\bullet}\ar@{~>}[lu]\ar@{~>}[lllu]\ar@{~>}[lllllu] }
$$
Its commutativity is evident from~(\ref{cd}) by realizing that $g$ and $\widetilde{w}$ play identical r\^oles in~(\ref{cd}): it is not relevant that $\widetilde{w}\in\mathcal{W}$, but only that $\widetilde{w}w_2\in\mathcal{W}$. Neutral elements of the addition operation are given by the equivalence classes $[(0,\mathrm{id})]$. An additive inverse of $[(f,w)]$ is $[(-f,w)]$. Hence the category of fractions is preadditive.

The localization functor was defined as $Loc:f\mapsto(f,\mathrm{id})$. For parallel morphisms $f,g\in\mathcal{C}$, adding roofs gives $[(f,\mathrm{id})]+[(g,\mathrm{id})]=[(f+g,\mathrm{id})]$; in other words, $Loc$ is additive. In particular, $Loc$ maps biproduct diagrams to biproduct diagrams. Then since the functor is surjective on objects, $\mathcal{C}[\mathcal{W}^{-1}]$ has biproducts. Any null object of $\mathcal{C}$ also is a null object in $\mathcal{C}[\mathcal{W}^{-1}]$. All in all, this makes $\mathcal{C}[\mathcal{W}^{-1}]$ additive.
\end{proof}

\begin{rem}
In an additive category, one can obviously replace the axiom~\ref{L2} by the slightly simpler requirement
\begin{enumerate}[label=(L\theenumi'')]
\setcounter{enumi}{1}
\item 
Given $w\in\mathcal{W}$ and a morphism $f$ such that $fw=0$, there exists $w'\in\mathcal{W}$ (or $w'\in\mathcal{W}_L$) such that $w'f=0$.
\beq
\xymatrix{{\bullet}\ar@{~>}[r]^w & {\bullet}\ar[r]^f & {\bullet}\ar@{~>}[r]^{w'} & {\bullet}}
\eeq
\end{enumerate}
\end{rem}

\bibliographystyle{plain}
\bibliography{cats_of_fractions}

\end{document}